\DeclareMathOperator{\Aut}{Aut}
\DeclareMathOperator{\Prob}{Prob}
\newcommand{\draftcom}[1]{}  
\newtheorem{theorem}{Theorem}[section]
\newtheorem{proposition}[theorem]{Proposition}
\newtheorem{corollary}[theorem]{Corollary}
\newtheorem{lemma}[theorem]{Lemma}
\newtheorem{remark}[theorem]{Remark}
\def \F {{\mathbb F}}
\def \P {{\mathbb P}}
\title[Note On Plane Curves]
{The fluctuations in the number of points of smooth plane curves over finite fields}
\author{Alina Bucur,
Chantal David,
Brooke Feigon,
Matilde Lal\'in }
\address{Institute for Advanced Study and University of California at San Diego} \email{alina@math.ias.edu}
\address{Concordia University and Institute for Advanced Study} \email{cdavid@mathstat.concordia.ca}
 \address{University of Toronto} \email{bfeigon@math.toronto.edu}
 \address{University of Alberta} \email{mlalin@math.ualberta.ca}
\begin{document}

\maketitle
\baselineskip 14pt

\section{Introduction}

In this note, we study the fluctuations in the number of points of
smooth projective plane curves over finite fields $\F_q$ as $q$ is
fixed and the genus varies. More precisely, we show that these fluctuations are predicted by a natural probabilistic model, in which the points of the projective plane impose independent conditions on the curve. The main tool we use is a geometric sieving process introduced by Poonen \cite{p}.

Let $S_d$ be the set of homogeneous polynomials $F(X,Y,Z)$ of degree
$d$ over $\F_q$, and let $S_d^{\rm ns} \subseteq S_d$ be the subset
of polynomials corresponding to smooth (or non-singular) curves
$C_F: F(X,Y,Z)=0$. The genus of $C_F$ is $(d-1)(d-2)/2.$ By running
over all polynomials $F \in S_d^{\text ns}$, one would expect
the average number of points of $C_F(\F_q)$ to be $q+1$. We show that
this is true, and that the difference between $\#C_F(\F_q)$ and
$q+1$ (properly normalized) tends to a standard Gaussian when $q$
and $d$ tend to infinity. As indicated before, our main tool is a sieving
process due to Poonen \cite{p} which allows us to count the number
of polynomials in $S_d$ which give rise to smooth curves $C_F$, and
the number of smooth curves $C_F$ which pass through a fixed set of
points of $\P^2(\F_q)$. We denote by $p$ the characteristic of $\mathbb F_q.$

\begin{theorem}\label{thm1} Let $X_1, \dots, X_{q^2+q+1}$ be $q^2+q+1$ i.i.d. random variables taking the
value $1$ with probability $(q+1)/(q^2+q+1)$ and the value $0$ with probability $q^2/(q^2+q+1).$
Then
\begin{eqnarray*}
\frac{\# \left\{ F \in {S}_d^{\rm ns} ; \#C_F(\F_q) = t \right\}}{\# {S}_d^{\rm ns}} &=&
\Prob\left( X_1 + \dots + X_{q^2+q+1} = t \right) \\
&\times& \left(1+ O\left(q^{t}\left({d^{-1/3}}+ (d-1)^2 q^{-\min\left(\left\lfloor \frac{d}{p}\right\rfloor+1, \frac{d}{3} \right)} + d q^{-\left\lfloor\frac{d-1}{p}\right\rfloor-1}\right)\right) \right).
 \end{eqnarray*}
\end{theorem}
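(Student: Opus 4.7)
\textbf{The plan is} to combine inclusion-exclusion over finite subsets of $\P^2(\F_q)$ with a Poonen-style effective sieve that counts smooth plane curves constrained to pass through a prescribed finite set of $\F_q$-rational points.

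\textbf{Step 1 (Inclusion-exclusion setup).} Set $N = q^2+q+1 = \#\P^2(\F_q)$ and $p = (q+1)/N$. For $T \subseteq \P^2(\F_q)$, let
$\mathcal{F}_T = \#\{F \in S_d^{\rm ns} : T \subseteq C_F(\F_q)\}$.
Möbius inversion on the Boolean lattice of finite subsets gives
$$\#\{F \in S_d^{\rm ns} : \#C_F(\F_q) = t\} = \sum_{s \geq t} (-1)^{s-t}\binom{s}{t}\sum_{|T|=s}\mathcal{F}_T,$$
while the same identity applied to i.i.d. Bernoulli$(p)$ variables yields
$$\Prob(X_1 + \dots + X_N = t) = \binom{N}{t}p^t(1-p)^{N-t} = \sum_{s \geq t}(-1)^{s-t}\binom{s}{t}\binom{N}{s}p^s.$$
The theorem therefore reduces to a term-by-term comparison $\sum_{|T|=s}\mathcal{F}_T \approx \#S_d^{\rm ns}\binom{N}{s}p^s$, together with control of the alternating tail.

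\textbf{Step 2 (Effective sieve through fixed rational points).} The core technical input is the estimate
$$\frac{\mathcal{F}_T}{\#S_d^{\rm ns}} = p^{|T|}\bigl(1+\epsilon(d,q,|T|)\bigr),$$
uniform in $T$ up to a threshold $|T|\le s_0$. The guiding heuristic is that imposing the linear condition $F(P)=0$ at a rational point $P$ changes Poonen's local smoothness density at $P$ from $(1-q^{-3})$ (probability $F$ is smooth at $P$) to $q^{-1}(1-q^{-2})$ (the joint probability that $F(P)=0$ and $F$ is smooth there), and
$$\frac{q^{-1}(1-q^{-2})}{1-q^{-3}} = \frac{q+1}{q^2+q+1} = p,$$
so telescoping the local ratios over $P \in T$ gives the factor $p^{|T|}$. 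To make this rigorous with the stated error, one must re-run Poonen's three-tier sieve over low-, medium-, and high-degree closed points of $\P^2$, carrying the additional constraints $F(P)=0$ through each tier. The three terms in the error bound correspond exactly to these stages: $d^{-1/3}$ arises from truncating the product over low-degree points; $(d-1)^2 q^{-\min(\lfloor d/p\rfloor+1, d/3)}$ comes from the bound on loci singular at a medium-degree point, where the characteristic enters because $\partial/\partial x^p$ vanishes identically; and $d\,q^{-\lfloor(d-1)/p\rfloor-1}$ is Poonen's high-degree contribution.

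\textbf{Step 3 (Assembly and tail).} Substitute the sieve estimate into the inclusion-exclusion and split the sum at the cutoff $s_0$. For $s\le s_0$ the sieve yields the target $\binom{N}{s}p^s$ up to the explicit error. For $s>s_0$ the tail is controlled by combining the trivial vanishing bound $\mathcal{F}_T \le q^{-s}\#S_d$ with the Weil bound $\#C_F(\F_q)\le q+1+(d-1)(d-2)\sqrt q$, which truncates the support of $\#C_F(\F_q)$ and makes the alternating tail telescope. The factor $q^t$ in the claimed error originates from comparing the accumulated absolute sieve error, summed via $\sum_s \binom{s}{t}\binom{N}{s}p^s \asymp \binom{N}{t}p^t(1+p)^{N-t}$, against the concentrating factor $(1-p)^{N-t}$ in the Binomial main term.

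\textbf{Main obstacle.} The hardest step by far is Step 2: making Poonen's sieve effective in the presence of prescribed rational vanishing conditions, and checking that the conditions $F(P)=0$ for $P\in T$ do not interact badly with the smoothness conditions at any tier in a way that would inflate the error with $|T|$. In particular, the characteristic-$p$ medium-degree argument—whose original form relies on an iterated partial-derivative trick that collapses when $d$ is a multiple of $p$—must be executed with the reduced freedom left after imposing $|T|$ extra linear constraints, which is what forces the explicit $\lfloor d/p\rfloor$ and $\lfloor(d-1)/p\rfloor$ thresholds in the final error.
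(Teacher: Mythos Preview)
Your overall strategy---run an effective version of Poonen's sieve with rational-point constraints, then compare to the binomial model---is the right idea, but you take a detour the paper avoids. The paper does \emph{not} use inclusion--exclusion. Poonen's sieve allows an arbitrary subset $T\subset H^0(Z,\mathcal O_Z)$ of local conditions, so the paper takes $Z$ to be the union of the $\mathfrak m_P^2$-neighborhoods of \emph{all} $N=q^2+q+1$ rational points and chooses $T$ to encode simultaneously ``$F(P_i)=0$ for $i\le t$'', ``$F(P_i)\neq 0$ for $i>t$'', and ``$(F,\partial_1F,\partial_2F)(P_i)\neq(0,0,0)$ for every $i$''. One application of the sieve then gives directly
\[
\frac{\#\{F\in S_d^{\rm ns}:\ C_F(\F_q)=\{P_1,\dots,P_t\}\}}{\#S_d^{\rm ns}}
= \left(\frac{q+1}{q^2+q+1}\right)^{t}\left(\frac{q^2}{q^2+q+1}\right)^{N-t}\bigl(1+O(q^t\cdot E)\bigr),
\]
and summing over the $\binom{N}{t}$ choices of which $t$ points are hit yields the binomial probability with no alternating signs and no tail to control. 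Your M\"obius route can in principle be pushed through, but the error accumulation in Step~3 is delicate: the relative sieve error in your $\mathcal F_T$ already scales like $q^{|T|}$, so the absolute-value bound you quote, $\sum_s\binom{s}{t}\binom{N}{s}p^s$, does not by itself explain the clean $q^t$ factor. The paper sidesteps all of this.

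A smaller correction: you misattribute the pieces of the error. In the paper's (and Poonen's) organization, the medium-degree tier contributes only $O(q^{-r})$, which becomes the $d^{-1/3}$ term after choosing $r\sim\tfrac{1}{3}\log d$. Both of the characteristic-dependent terms $(d-1)^2q^{-\min(\lfloor d/p\rfloor+1,\,d/3)}$ and $d\,q^{-\lfloor(d-1)/p\rfloor-1}$ come from the \emph{high}-degree tier, via the decomposition $f=g_0+g_1^px_1+g_2^px_2+h^p$ and the two claims bounding bad $g_i$'s and bad $h$'s respectively. The ``derivative trick'' you place in the medium tier belongs there.
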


We now explain why these random variables model the point count for smooth curves. Intuitively, if $F$ is any polynomial in $S_d$, then the set of $\mathbb F_q$-points of the curve $C_F$ is a subset of $\mathbb P^2(\mathbb F_q)$, which has
$q^2+q+1$ elements. Heuristically, these points impose independent
conditions on $F$.

Let us look at one of those conditions, say at the point $[0:0:1].$ Put
$f(x,y) = F(X,Y,1)$  and write
\[f(x,y) = a_{0,0} + a_{1,0} x + a_{0,1} y + \ldots. \]

Since we insist that $C_F$ is smooth, we cannot have $(a_{0,0}, a_{1,0},
a_{0,1}) = (0,0,0),$ so there are $q^3 - 1$ possibilities for this triple.
Of these, the ones that actually correspond to cases where $[0:0:1]$ is on the curve $C_F$
are those where $a_{0,0} = 0,$ of which there are $q^2 - 1.$ So the
probability that $[0:0:1]$ lies on $C_F$ is
  \[ \frac{q^2 - 1}{q^3 - 1} = \frac{q + 1}{q^2 + q + 1}.\]
The argument works the same for any point in the plane, and in particular the expected number of points in $C_F(\mathbb F_q)$ is $q+1.$ This explains the random variables of Theorem \ref{thm1}.
Namely, the probability that $X=1$ (respectively $X=0$)
is the probability that a point $P \in \P^2(\F_q)$ belongs (respectively does not belong)
to a smooth curve $F(X,Y,Z)=0$.

\begin{remark}
We may also let $q$ go to infinity in Theorem \ref{thm1} provided that $d$ goes to infinity in such a way that $d>q^{3(q^2+q+1)+\varepsilon}.$ By studying the moments we can substantially weaken this condition.
\end{remark}

Since the average value of each of the random variables $X_i$ is $(q+1)/(q^2+q+1)$, and the
standard deviation is $q \sqrt{q+1}/(q^2+q+1)$, it follows from the Triangular Central Limit Theorem
\cite{bi}
that
$$
\frac{(X_1+\ldots+X_{q^2+q+1}) - (q+1)}{\sqrt{q+1}} \rightarrow N(0,1)
$$
as $q$ tends to infinity. We can show that this also holds for $\# C_F(\F_q)$ for $F \in {S}_d^{\rm ns},$
as $q$ and $d$ tend to infinity with $d > q^{1+\varepsilon}$, by showing that, under these conditions,  the integral moments of
$$
\frac{\# C_F(\F_q) - (q+1)}{\sqrt{q+1}}
$$
converge to the integral moments of $\displaystyle \frac{(X_1+\ldots+X_{q^2+q+1}) - (q+1)}{\sqrt{q+1}}.$ 

\begin{theorem}\label{thm2}
Let $k$ be a positive integer, and let
$$
M_k(q,d) = \frac{1}{\# {S}_d^{\rm ns}} \sum_{F \in {S}_d^{\rm ns}} \left(
\frac{\# C_F(\F_q) - (q+1)}{\sqrt{q+1}} \right)^k.
$$
Then,
\begin{eqnarray*}
M_k(q,d) &=& {\mathbb{E}} \left( \left( \frac{1}{\sqrt{q+1}} \left( \sum_{i=1}^{q^2+q+1} X_i - (q+1) \right) \right)^k \right) \\
&\times& \left(1+ O\left( q^{\min(k, q^2+q+1)} \left( q^{-k}d^{-1/3} + (d-1)^2 q^{-\min\left(\left\lfloor \frac{d}{p}\right\rfloor+1, \frac{d}{3} \right)} + d q^{-\left\lfloor\frac{d-1}{p}\right\rfloor-1}\right)\right) \right).
\end{eqnarray*}
\end{theorem}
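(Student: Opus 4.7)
My plan is to parallel the Poonen-sieve argument underlying Theorem~\ref{thm1}, now applied to the $k$-th central moment rather than to a fixed point count. Set $\rho = (q+1)/(q^2+q+1)$. Writing $N_F = \#C_F(\F_q) = \sum_{P \in \P^2(\F_q)} \mathbf{1}_{P \in C_F}$ and using $(q^2+q+1)\rho = q+1$ gives the identity
$$
(N_F - (q+1))^k \;=\; \sum_{(P_1,\dots,P_k)\in \P^2(\F_q)^k} \prod_{i=1}^{k} \bigl(\mathbf{1}_{P_i \in C_F} - \rho\bigr).
$$
Averaging over $F \in S_d^{\rm ns}$ and using $\mathbf{1}_{P \in C_F}^{m} = \mathbf{1}_{P \in C_F}$ converts each tuple into a signed combination of incidence probabilities $\Prob_F(T \subseteq C_F)$, indexed by subsets $T$ of the distinct points of the tuple. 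The same combinatorial expansion applied to $X_1,\dots,X_{q^2+q+1}$, re-indexed over $P \in \P^2(\F_q)$, yields the identical formula with $\Prob_F(T \subseteq C_F)$ replaced by $\rho^{|T|}$.

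The decisive input, which is exactly the sieve estimate driving the proof of Theorem~\ref{thm1}, is
$$
\frac{\#\{F \in S_d^{\rm ns} : T \subseteq C_F\}}{\# S_d^{\rm ns}} \;=\; \rho^{|T|}\bigl(1 + O(\tilde{\mathcal E})\bigr), \quad \tilde{\mathcal E} = d^{-1/3} + (d-1)^2 q^{-\min(\lfloor d/p\rfloor+1,\,d/3)} + d\, q^{-\lfloor (d-1)/p\rfloor - 1},
$$
uniform over sets $T \subseteq \P^2(\F_q)$ of bounded size. Substituting into the signed sum collapses the main term of each tuple to $\prod_{i=1}^{r} \mu_{m_i}$, the product of Bernoulli central moments $\mu_m = \E[(X - \rho)^m]$ indexed by the multiplicity pattern $(m_1,\dots,m_r)$ of the tuple. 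This is a one-line verification: write $(\mathbf{1}_P - \rho)^m = a\,\mathbf{1}_P + b$ and observe $\rho a + b = \mu_m$. Summing over all tuples reproduces $\E[(Y - (q+1))^k]$, so after dividing by $(q+1)^{k/2}$ the leading terms of the two sides of Theorem~\ref{thm2} agree.

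What remains, and is the real work, is estimating the accumulated error. Naive bounds on the number of tuples and on the per-tuple sieve error give prefactors of order $(q+1)^k$ or $q^{2k}$, both too crude. The correct bookkeeping stratifies the combinatorial sum by the number $r$ of distinct points in the tuple: one has $r \leq \min(k,\, q^2+q+1)$ and there are $\binom{q^2+q+1}{r}$ point subsets, and since the per-tuple error is multiplied by a compensating factor of $\rho^{r}$, summing over $r$ produces exactly the prefactor $q^{\min(k,\, q^2+q+1)}$. The extra $q^{-k}$ that multiplies the $d^{-1/3}$ term comes from the normalization by $(q+1)^{k/2}$ together with the cancellation $\mu_1 = 0$, which kills the main-term contribution of any tuple containing a point of multiplicity one and effectively caps the range of $r$ at $k/2$ on the diagonal; the characteristic-$p$ terms in $\tilde{\mathcal E}$ do not enjoy this additional cancellation and therefore appear without the $q^{-k}$. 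The main obstacle is to carry out this two-parameter error accounting cleanly enough that the bound matches the precise form stated in Theorem~\ref{thm2}.
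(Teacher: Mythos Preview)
Your centered expansion is a legitimate alternative to the paper's route, which instead computes the \emph{raw} moments $N_k(q,d)=(q+1)^{-k/2}(\#S_d^{\rm ns})^{-1}\sum_F (\#C_F(\F_q))^k$ first, matches them to $\mathbb{E}\bigl[((q+1)^{-1/2}\sum X_i)^k\bigr]$ term by term, and only afterward passes to $M_k$ via the binomial theorem. The main-term identification works either way, and your observation that $\mu_1=0$ forces the diagonal contribution to live on tuples with $r\le k/2$ is correct for the main term.

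The gap is in your error accounting, specifically your explanation of the factor $q^{-k}$ in front of $d^{-1/3}$. You attribute it to the cancellation $\mu_1=0$ together with the normalization $(q+1)^{-k/2}$, but those control only the \emph{main} term: $\mu_1=0$ kills the leading contribution of any tuple containing a multiplicity-one point, yet does nothing to the sieve error attached to that same tuple. Concretely, with $(\mathbf 1_Q-\rho)^m=a_m\mathbf 1_Q+b_m$ and the subset expansion you describe, the per-tuple error is $\sum_S c_S\cdot[\text{additive sieve error for }S]$; since the additive sieve error is essentially independent of $S$ and $\sum_S c_S=\prod_j(1-\rho)^{m_j}=(1-\rho)^k$, there is no cancellation to exploit. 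With the $B=0$ sieve input you quote ($\tilde{\mathcal E}=d^{-1/3}+\cdots$, which is the Theorem~\ref{thm1} version), the accumulated relative error comes out of order $q^{\min(k,q^2+q+1)}d^{-1/3}$, a factor $q^{k}$ worse than the statement of Theorem~\ref{thm2}.

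In the paper the $q^{-k}$ has a purely mechanical origin unrelated to moment cancellation: Poonen's sieve carries a free cutoff $r=(3B+\log_q d)/3$, and the low/medium-degree contribution to the error is $O(q^{-B}d^{-1/3})$ for any fixed $B\ge 0$. Theorem~\ref{thm1} takes $B=0$; for the moment computation the paper simply takes $B=k$, so the sieve input itself already reads $\rho^{\ell}\bigl(1+O(q^{\ell}(q^{-k}d^{-1/3}+\cdots))\bigr)$. The high-degree (characteristic-$p$) terms are unaffected by the choice of $B$, which is exactly why they appear in the final bound without the $q^{-k}$ --- not because they ``do not enjoy the additional cancellation''. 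Your argument will go through once you invoke the sieve with $B=k$ rather than trying to manufacture the saving from centering.
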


\begin{corollary}
When $q$ and  $d$ tend to infinity and $d > q^{1+\varepsilon}$,
$$
\frac{\# C_F(\F_q) - (q+1)}{\sqrt{q+1}} \rightarrow N(0,1).
$$
\end{corollary}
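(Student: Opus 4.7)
The plan is to invoke the method of moments: I will show that for every fixed positive integer $k$,
\[
M_k(q,d) \longrightarrow \mathbb{E}[Z^k], \qquad Z \sim N(0,1),
\]
along the regime $q,d \to \infty$ with $d > q^{1+\varepsilon}$. Since $N(0,1)$ is determined by its moments, this moment convergence upgrades to convergence in distribution and the corollary follows.

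Writing $n = q^2+q+1$ and $S_n = X_1+\cdots+X_n$, Theorem~\ref{thm2} gives
\[
M_k(q,d) \;=\; \mathbb{E}\!\left[\left(\frac{S_n-(q+1)}{\sqrt{q+1}}\right)^{\!k}\right]\bigl(1 + E_k(q,d)\bigr),
\]
where $E_k(q,d)$ is the error term stated there. The first task is to check that $E_k(q,d) \to 0$. For fixed $k$ and $q$ large enough, $q^{\min(k,n)} = q^k$, so the first contribution reduces to $d^{-1/3}$. For the remaining two, since $p \le q$ one has
\[
\min\!\left(\Big\lfloor\tfrac{d}{p}\Big\rfloor+1,\; \tfrac{d}{3}\right) \;\ge\; \tfrac{d}{3q}, \qquad \Big\lfloor\tfrac{d-1}{p}\Big\rfloor+1 \;\ge\; \tfrac{d}{3q},
\]
so both of these summands are bounded by a polynomial in $d$ times $q^{-d/(3q)}$. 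As $d > q^{1+\varepsilon}$ forces $d/(3q) > q^\varepsilon/3 \to \infty$, the factor $q^{-d/(3q)}$ decays faster than any polynomial in $d$, so the full error tends to zero.

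Next I would show that the inner expectation converges to $\mathbb{E}[Z^k]$. Here $S_n$ is binomial with mean $q+1$ and variance $\sigma_q^2 = q^2(q+1)/n$. The triangular central limit theorem cited just before Theorem~\ref{thm2} gives $(S_n-(q+1))/\sigma_q \Rightarrow Z$ as $q\to\infty$, and moment convergence follows from the boundedness of the $X_i$ (either by uniform integrability or from the explicit formulas for binomial moments). Since $\sigma_q/\sqrt{q+1} = q/\sqrt{n} \to 1$, the same limit holds with $\sigma_q$ replaced by $\sqrt{q+1}$, and the inner expectation converges to $\mathbb{E}[Z^k]$.

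Combining the two steps yields $M_k(q,d)\to\mathbb{E}[Z^k]$ for every $k$, and the method of moments concludes. The main obstacle is controlling $E_k(q,d)$ when $d$ grows much faster than $q$: the prefactors $(d-1)^2$ and $d$ can be enormous, so one must verify that the exponent $d/(3q)$ dominates $\log d$ by a wide margin, ensuring that $q^{-d/(3q)}$ overwhelms any polynomial in $d$. Everything else is routine bookkeeping once Theorems~\ref{thm1} and~\ref{thm2} are in hand.
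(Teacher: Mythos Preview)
Your proposal is correct and matches the paper's approach: deduce moment convergence from Theorem~\ref{thm2} combined with the triangular CLT for the binomial array, then conclude by the method of moments. You supply two details the paper leaves implicit---the check that the error term in Theorem~\ref{thm2} vanishes when $d>q^{1+\varepsilon}$ (your observation that the exponent $d/(3q)\ge q^{\varepsilon}/3$ eventually dominates $\log d$ is the right one), and the upgrade from distributional to moment convergence for the $X_i$ sum---both handled correctly.
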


Finally, we point out that Theorem \ref{thm1} implies that the average number of points on a smooth plane curve is $q+1$, but this is not true anymore if one looks at all plane curves. Our heuristic above shows that for a random polynomial $F \in S_d$ the probability that a point $P\in \mathbb P^2(\mathbb F_q)$ actually lies on $C_F$ is $1/q$. This can also be proven easily (see Section \ref{sectionlowdegree}
for the proof); we record the result here.

\begin{proposition} \label{nonsmooththm}
Let $Y_1, \dots, Y_{q^2+q+1}$ be i.i.d. random variables taking the value $1$ with probability
$1/q$ and the value $0$ with probability $(q-1)/q$.
Then, for $d > q^2+q+1$,
$$\frac{\# \{ F \in S_d ; \#C_F(\F_q))=t \}}{\# S_d} =
{\rm Prob}\left( Y_1 + \dots + Y_{q^2+q+1} = t \right).$$
\end{proposition}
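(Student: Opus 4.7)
The plan is to show that, when $d$ is large enough, evaluation at the points of $\P^2(\F_q)$ turns a uniformly random $F \in S_d$ into a uniformly random vector in $\F_q^{q^2+q+1}$; once this is established, each coordinate vanishes independently with probability $1/q$, and the distribution of $\#C_F(\F_q)$ coincides with that of $Y_1+\cdots+Y_{q^2+q+1}$.

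Concretely, I would enumerate $\P^2(\F_q) = \{P_1, \ldots, P_{q^2+q+1}\}$, fix for each $i$ an affine representative $\widetilde{P}_i \in \F_q^3\setminus\{0\}$, and consider the $\F_q$-linear evaluation map
\[
\phi : S_d \longrightarrow \F_q^{q^2+q+1}, \qquad \phi(F) = \bigl(F(\widetilde{P}_1),\ldots,F(\widetilde{P}_{q^2+q+1})\bigr).
\]
Whether the $i$-th coordinate vanishes is independent of the chosen representative, and it vanishes iff $P_i \in C_F(\F_q)$, so $\#C_F(\F_q)$ is the number of zero coordinates of $\phi(F)$ and depends on $F$ only through $\phi(F)$.

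The heart of the argument, and the only substantive obstacle, is the surjectivity of $\phi$. It suffices to construct, for each $i$, a polynomial $G_i \in S_d$ with $G_i(\widetilde{P}_i)\neq 0$ and $G_i(\widetilde{P}_j)=0$ for every $j\neq i$. For each $j\neq i$, choose a linear form $L_{i,j}$ vanishing at $P_j$ but not at $P_i$ (geometrically, a line through $P_j$ that avoids $P_i$, which exists because $P_i\neq P_j$). The product $\prod_{j\neq i} L_{i,j}$ is then a form of degree $q^2+q$ with the required vanishing pattern, and multiplying by $M^{d-q^2-q}$ for any linear form $M$ with $M(\widetilde{P}_i)\neq 0$ yields $G_i \in S_d$. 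This construction only requires $d\ge q^2+q$, which is amply implied by the hypothesis $d > q^2+q+1$.

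Once $\phi$ is surjective, $\F_q$-linearity forces all its fibers to have the same size $\#S_d/q^{q^2+q+1}$, so $\phi(F)$ is uniformly distributed on $\F_q^{q^2+q+1}$ when $F$ is drawn uniformly from $S_d$; its $q^2+q+1$ coordinates are then iid uniform in $\F_q$, each equal to $0$ with probability $1/q$. The number of zero coordinates, which is $\#C_F(\F_q)$, therefore has exactly the distribution of $Y_1+\cdots+Y_{q^2+q+1}$, completing the proof. Everything after the surjectivity step is bookkeeping.
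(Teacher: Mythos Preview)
Your argument is correct and follows the same overall strategy as the paper: both proofs hinge on the surjectivity of the evaluation map $S_d \to \F_q^{q^2+q+1}$ (equivalently, $S_d \to H^0(Z,\mathcal O_Z)$ with $Z$ the reduced subscheme $\P^2(\F_q)$), after which uniformity of the fibers gives the result immediately.

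The only real difference is how surjectivity is established. The paper invokes Poonen's general lemma that $S_d \to H^0(Y,\mathcal O_Y(d))$ is surjective once $d \ge \dim H^0(Y,\mathcal O_Y)-1$, applied with $Y=Z$ and $\dim H^0(Z,\mathcal O_Z)=q^2+q+1$; this is the $r=0$ case of their Lemma~\ref{lsmdeg}. You instead give a hands-on construction of interpolating forms $G_i \in S_d$ as products of linear forms, which is entirely elementary and avoids any appeal to cohomology or to \cite{p}. Your route is more self-contained for this particular statement; the paper's route has the advantage that the same lemma is already in place for the harder smooth-curve results, so no extra work is needed. Both yield the same threshold $d \ge q^2+q$.
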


Proposition \ref{nonsmooththm} is an exact result (without an error term)
as there is no sieving involved. For smooth curves, one has to sieve over primes
of arbitrarily large degree, since a smooth curve is required not to have any singular points over  
$\overline{\F}_q,$ not only over $\F_q$. This introduces the error term. In particular, the  average number of points on a plane curve $F(X,Y,Z)=0$ without any smoothness
condition is $q+1 + \frac{1}{q}$.

\subsection*{Some related work}
Brock and Granville \cite{bg} calculated the average number of points in families of curves of given genus $g$ over finite fields,  
\[N_r(g,q)=\sum_{C/{\mathbb F}_q,{\rm genus}(C)=g}{N_r(C)\over |{\Aut}(C/{\mathbb F}_q)|}\Biggl/\sum_{C/{\mathbb F}_q,{\rm genus}(C)=g}{1\over |{\Aut}(C/{\mathbb F}_q)|}\] 
where $N_r(C)$ denotes the number of ${\mathbb F}_{q^r}$-rational points of $C$. It turns out that, depending on the value of $r$, $N_r(g,q)$ shows very different behavior as $q\rightarrow\infty$. Indeed, $N_r(g,q)=q^r+o(q^{r/2})$ unless $r$ is even and $r\leq 2g$, in which case $N_r(g,q)=q^r+q^{r/2}+o(q^{r/2})$. This ``excess''  phenomenon has a natural explanation in terms of Deligne's equidistribution theorem for Frobenius conjugacy classes of the $\ell$-adic sheaf naturally attached to this family, as pointed out by Katz \cite{k}. Using Deligne's theorem, Katz showed that as $q\rightarrow\infty$, $N_r(g,q)$ can be expressed in terms of the integral $I_r(G)=\int_{G}{\rm tr}(A^r)dA$, where $G\ (={\rm USp}(2g)$ in this case) is a compact form of the geometric monodromy group of that sheaf; the occurrence of the excess phenomenon depends on the values of $I_r(G)$, which are computed using the representation theory of $G$. This approach, which is described in a more general form in \cite{ks}, has the advantage of being applicable in other situations in which the geometric monodromy group has been identified, for instance, when calculating the average number of points in the family of smooth degree $d$ hypersurfaces in ${\mathbb P}^n$ over finite fields. In particular, for $n=2$, one obtains the average number of points of smooth planes curves of degree $d$, which are the subject of the present investigation, but from a different point of view.

Namely, while both \cite{bg} and \cite{k} are concerned with curves of fixed genus as the number of points in the base field varies, we consider the complementary situation of working over a fixed field and allowing the genus to vary. We also consider the question of the double limit as both the genus and the number of points in the base field grow. Similar questions were investigated in
\cite{KuRu} for hyperelliptic curves, and in \cite{BuDaFeLa1,
BuDaFeLa2, Xi} for cyclic trigonal curves and general cyclic
$p$-covers.

\section{Poonen's sieve}\label{sieve}

We will adapt the results in Section 2 of \cite{p} to our case,
which is simpler as we take $n=2$ and $X=\mathbb P^2 \subset \mathbb P^2$. But, unlike Poonen, we need to keep track of the error terms.

First let us do this in general in his setup, namely take $Z\subset X$ a finite subscheme. Then $U = \mathbb P^2 \setminus Z$ will automatically be smooth and of dimension $2.$  We will need to choose $Z$ and $T\subset H^0(Z, \mathcal O_Z) $ in a way that imposes the appropriate local conditions for our curves at finitely many points.

The strategy is to check the smoothness separately at points of low, medium, and high degree, and then combine the conditions at the end. The main term will come from imposing conditions on the values taken by both a random polynomial $F \in S_d$ and its first order derivatives at the points in $U$ of relatively small degree (for $d$ large enough). The error term will
come from  smoothness conditions at primes $P$ of large
degree (compared to $d$).

Following  Poonen, denote $A=\mathbb F_q[x_1,x_2]$ and $A_{\leq d}$ the set of polynomials in $A$ of degree at most $d$.
Denote by $U_{<r}$ the closed points of degree $<r$ and by $U_{>r}$ the closed points of $U$ of degree $>r.$ Set
\begin{eqnarray*} \mathcal P_{d,r} &=& \{ F\in S_d; C_F \cap U \textrm{ is smooth of dimension $1$ at all } P \in U_{<r}, F|_Z\in T \},
\\
 \mathcal Q _{r,d} &=& \{ F\in S_d; \exists P \in U \textrm{ s.t. }r\leq \deg P \leq d/3, C_F \cap U \textrm{ is not smooth of dimension $1$ at } P \},
 \\
\mathcal Q ^{high}_{d} &=& \{ F\in S_d; \exists P \in U_{>d/3} \textrm{ s.t. } C_F \cap U \textrm{ is not smooth of dimension $1$ at } P \}.
\end{eqnarray*}

\subsection{Points of low degree}
\label{sectionlowdegree}
All the results of this section depend on the
following lemma proven in \cite{p} using classical results from algebraic geometry.

\begin{lemma}  For any subscheme $Y\subset \mathbb P^2$, the map $\phi_d:S_d =H^0(\mathbb P^2, \mathcal O_{\mathbb P^2}(d)) \rightarrow  H^0(Y, \mathcal O_Y(d))$ is surjective for $d\geq \dim H^0(Y, \mathcal O_Y) - 1.$\end{lemma}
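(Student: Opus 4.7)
The plan is to translate the statement into a sheaf-cohomology vanishing. The short exact sequence of sheaves on $\mathbb P^2$
\[
0 \to \mathcal I_Y(d) \to \mathcal O_{\mathbb P^2}(d) \to \mathcal O_Y(d) \to 0
\]
produces a long exact sequence in cohomology, and since $H^1(\mathbb P^2, \mathcal O_{\mathbb P^2}(d))$ vanishes for every $d$, the surjectivity of $\phi_d$ is equivalent to the vanishing $H^1(\mathbb P^2, \mathcal I_Y(d)) = 0$. In our application $Y$ is always a finite (zero-dimensional) subscheme, so writing $\ell := \dim_{\mathbb F_q} H^0(Y, \mathcal O_Y)$ for its $\mathbb F_q$-length, the lemma reduces to a Castelnuovo--Mumford-type regularity bound: $H^1(\mathcal I_Y(d)) = 0$ whenever $d \geq \ell - 1$.

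To prove this bound, I would first pass to $\bar{\mathbb F}_q$, so that every closed point of $Y$ acquires rational residue field; base change is faithfully flat and preserves both cohomology and $\ell$, so this does not weaken the conclusion. Then induct on $\ell$, with the trivial base case $\ell = 0$. In the inductive step, pick a rational point $P \in \mathrm{Supp}(Y)$ and a nonzero element $\bar s$ in the socle of the Artinian local ring $\mathcal O_{Y,P}$; lifting $\bar s$ to a local section produces an ideal $\mathcal I_{Y'} = \mathcal I_Y + (\tilde s)$ cutting out a subscheme $Y' \subsetneq Y$ of length $\ell - 1$, and fitting in the short exact sequence
\[
0 \to \mathcal I_Y \to \mathcal I_{Y'} \to k_P \to 0
\]
whose right-most term is the skyscraper sheaf at $P$. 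Twisting by $\mathcal O(d)$, taking cohomology, and invoking the inductive hypothesis $H^1(\mathcal I_{Y'}(d)) = 0$ for $d \geq \ell - 2$ reduces matters to the surjectivity of the restriction map $H^0(\mathcal I_{Y'}(d)) \to k$ for $d \geq \ell - 1$.

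This last surjectivity is where I expect the real work to lie. One must exhibit a polynomial of degree $d$ vanishing scheme-theoretically on $Y'$ but not on all of $Y$. The natural candidate is a product of linear forms, one for each unit of length of $Y'$: at a fat point of length $m$ one takes $m$ linear forms through that point so that the product locally lies in $\mathfrak m_Q^m \subseteq \mathcal I_{Y',Q}$, and chooses each form to avoid $P$ (always possible over $\bar{\mathbb F}_q$). This gives degree $\ell - 1$, and any extra factors needed to reach degree $d$ are linear forms not vanishing at $P$. Verifying that the product genuinely lies in $\mathcal I_{Y'}$ at each fat point — which rests on the comparison between $\mathfrak m_Q^m$ and the local ideal $\mathcal I_{Y',Q}$ — is the main technical step; alternatively one can sidestep the explicit construction by appealing directly to Gotzmann's regularity theorem, which bounds the Castelnuovo--Mumford regularity of a zero-dimensional subscheme of $\mathbb P^n$ by its constant Hilbert polynomial $\ell$ and thus delivers the same vanishing.
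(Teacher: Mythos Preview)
The paper's own proof is a one-line citation: it specializes Poonen's Lemma~2.1 to $n=2$. Your proposal instead rebuilds a proof from scratch, so the real comparison is with Poonen's argument.

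Your cohomological reduction to $H^1(\mathcal I_Y(d))=0$ is correct, and the Gotzmann fallback you mention at the end does close the argument. The explicit product-of-linear-forms construction, however, has a genuine gap. You take, at each fat point $Q$ of $Y'$ of local length $m$, a product of $m$ linear forms through $Q$ that avoid $P$. If $Y$ itself is non-reduced at $P$ then $P\in\mathrm{Supp}(Y')$, and for $Q=P$ the conditions ``through $Q$'' and ``avoid $P$'' are incompatible. If instead you drop the avoidance condition at $P$ and take $m_P-1$ linear forms through $P$, their product lands in $\mathfrak m_P^{\,m_P-1}$, which may well lie inside $\mathcal I_{Y,P}$: for $Y=V(\mathfrak m_P^2)$ (length~$3$) and $Y'=V(x,y^2)$ (length~$2$), every product of two linear forms through $P$ already lies in $\mathfrak m_P^2=\mathcal I_{Y,P}$, so the section maps to zero in $\mathcal I_{Y'}/\mathcal I_Y$. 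You flagged the containment $\mathfrak m_Q^m\subseteq\mathcal I_{Y',Q}$ as the delicate step, but that inclusion is automatic (an Artinian local ring of length $m$ has $\mathfrak m^m=0$); the actual obstruction is the opposite requirement at $P$.

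It is also worth noting that the induction and the regularity machinery are both avoidable. After choosing a hyperplane missing $Y$ and passing to the affine chart, let $V_j\subseteq\mathcal O_Y$ be the image of polynomials of degree $\le j$. If $V_j=V_{j+1}$ then $V_j$ is stable under multiplication by each coordinate function, hence is a subalgebra containing the generators, hence equals $\mathcal O_Y$. Thus the chain $V_0\subsetneq V_1\subsetneq\cdots$ is strictly increasing until it hits $\mathcal O_Y$, and since $\dim V_0=1$ and $\dim\mathcal O_Y=\ell$ one gets $V_{\ell-1}=\mathcal O_Y$ directly. This is essentially Poonen's argument and is considerably lighter than either of your two routes.
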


\begin{proof}Take $n=2$ in Lemma 2.1 in \cite{p}.\end{proof}

\begin{lemma} \label{lsmdeg}Let $U_{<r}=\{ P_1, \ldots, P_s\}$. Then for $d \geq 3rs+\dim  H^0(Z,\mathcal O_Z) -1$, we have
\[ \frac{\# \mathcal P_{d,r} } {\# S_d} = \frac{\# T} {\# H^0(Z,\mathcal O_Z)} \prod_{i=1}^s (1-q^{-3\deg P_i}).\]
\end{lemma}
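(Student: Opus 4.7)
The plan is to translate the defining conditions of $\mathcal{P}_{d,r}$ into a single linear-algebra problem on $H^0(Y, \mathcal{O}_Y(d))$ for a suitably enlarged subscheme $Y$, then apply the preceding lemma and count.

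First, I would thicken the points $P_i$ so as to encode the smoothness conditions. Let $Y_i = \mathrm{Spec}(\mathcal{O}_{U,P_i}/\mathfrak{m}_{P_i}^2)$. Using $\mathcal{O}_{U,P_i}/\mathfrak{m}_{P_i}^2 \cong \kappa(P_i) \oplus \mathfrak{m}_{P_i}/\mathfrak{m}_{P_i}^2$ and the fact that $U$ is smooth of dimension $2$ at $P_i$, one has $\dim_{\F_q} H^0(Y_i, \mathcal{O}_{Y_i}) = \deg P_i + 2\deg P_i = 3\deg P_i$. Since each $P_i$ lies in $U$ while $Z \subset \P^2 \setminus U$, the scheme $Y := Z \sqcup Y_1 \sqcup \cdots \sqcup Y_s$ is a disjoint union, so
\[
\dim H^0(Y, \mathcal{O}_Y) = \dim H^0(Z, \mathcal{O}_Z) + 3\sum_{i=1}^s \deg P_i < \dim H^0(Z, \mathcal{O}_Z) + 3rs,
\]
using $\deg P_i < r$. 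The hypothesis on $d$ then gives $d \geq \dim H^0(Y, \mathcal{O}_Y) - 1$, so the preceding lemma applied to $Y$ shows that $\phi_d : S_d \to H^0(Y, \mathcal{O}_Y(d))$ is surjective.

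Under the product decomposition $H^0(Y, \mathcal{O}_Y(d)) = H^0(Z, \mathcal{O}_Z(d)) \times \prod_{i=1}^s H^0(Y_i, \mathcal{O}_{Y_i}(d))$, the condition $F|_Z \in T$ constrains only the $Z$-component. At each $P_i$, by the Jacobian criterion, $C_F \cap U$ is smooth of dimension $1$ at $P_i$ iff either $F(P_i) \neq 0$ (so $P_i \notin C_F$) or $F(P_i) = 0$ but some partial derivative of $F$ does not vanish at $P_i$; both alternatives are jointly equivalent to the image of $F$ in $\mathcal{O}_{U,P_i}/\mathfrak{m}_{P_i}^2$ being nonzero. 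Hence the set of "good" $P_i$-components is $G_i \subset H^0(Y_i, \mathcal{O}_{Y_i}(d))$ with $|G_i| = q^{3\deg P_i} - 1$, of density $1 - q^{-3\deg P_i}$.

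Finally, because $\phi_d$ is a surjection of $\F_q$-vector spaces, all its fibers have the same cardinality, so the density of $\mathcal{P}_{d,r}$ in $S_d$ equals the density of $T \times G_1 \times \cdots \times G_s$ inside $H^0(Y, \mathcal{O}_Y(d))$, giving
\[
\frac{\# \mathcal{P}_{d,r}}{\# S_d} = \frac{\# T}{\# H^0(Z, \mathcal{O}_Z)} \prod_{i=1}^s \left(1 - q^{-3\deg P_i}\right),
\]
as claimed. The one delicate step is the local translation in the middle paragraph: one must check that a single nonvanishing condition in $\mathcal{O}_{U,P_i}/\mathfrak{m}_{P_i}^2$ captures simultaneously "$P_i \notin C_F$" and "$P_i \in C_F$ with $C_F$ smooth at $P_i$", which rests on $\mathcal{O}_{U,P_i}$ being a $2$-dimensional regular local ring.
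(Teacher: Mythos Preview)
Your proof is correct and follows essentially the same approach as the paper: the paper simply cites Lemma~2.2 of Poonen and checks the dimension bound $\dim H^0(Y_i,\mathcal O_{Y_i})=3\deg P_i<3r$, whereas you have unpacked that cited argument in full (defining the thickenings $Y_i$, assembling $Y=Z\sqcup\bigsqcup Y_i$, invoking surjectivity of $\phi_d$, and reading off the density as a product). Nothing is missing or different in substance.
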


\begin{proof}The result follows from Lemma 2.2 in \cite{p}, as long as we ensure that $d+1$ is bigger than the dimension of $H^0(Z,\mathcal O_Z)\times  \prod_{i=1}^s H^0(Y_i,\mathcal O_{Y_i})$, where $Y_i$ is the closed subscheme corresponding to $P_i$ in the manner described by Poonen. Thus $\dim  H^0(Y_i,\mathcal O_{Y_i}) = 3\deg P_i <3r.$ \end{proof}

\subsection*{Proof of Proposition\ref{nonsmooththm}} We can use this last result to compute the average number of points on the curves $C_F$ associated to
the polynomials $F \in S_d$ without any smoothness condition.  We
pick $P_1, \dots, P_{q^2+q+1}$ an enumeration of the points of
$\P^2(\F_q)$, and we take $Z$ to be a ${\mathfrak m}_P$-neighborhood
for each point $P \in \mathbb P^2(\mathbb F_q)$ (this means that we
look at the value of $F$ at that point; for smoothness, we will also
look at the value of its first order derivatives). Thus
\[ H^0(Z, \mathcal O_Z)= \prod_{P \in \mathbb P^2(\mathbb F_q)} \mathcal O_P/\mathfrak m_P.\]
Each space has dimension $1$, so $\dim H^0(Z, \mathcal O_Z) = q^2+q+1,$ and $\#  H^0(Z, \mathcal O_Z) = q^{q^2+q+1}.$ Let $0 \leq t \leq q^2+q+1$.
We want to count all curves $C_F$ such that $P_1, \dots, P_t \in C_F(\F_q)$ and
$P_{t+1}, \dots, P_{q^2+q+1} \not\in C_F(\F_q)$. We then choose
\[T= \{ (a_i)_{1\leq i \leq q^2+q+1}; a_1, \dots, a_t = 0, a_{t+1}, \dots, a_{q^2+q+1} \in \F_q^\times\},\]
and $|T| =  (q-1)^{q^2+q+1-t}.$ It follows by taking $r=0$ in  Lemma
\ref{lsmdeg} that, when $d \geq q^2+q$,
\begin{eqnarray*}
&&\frac{\# \left\{ F \in S_d ; P_1, \dots, P_t \in C_F(\F_q), P_{t+1}, \dots, P_{q^2+q+1}
\not\in C_F(\F_q) \right\}}{\# S_d} \\ &&= \frac{\# \mathcal P_{d,0} } {\# S_d}
= \frac{\# T} {\# H^0(Z,\mathcal O_Z)}
= \frac{(q-1)^{q^2+q+1-t}}{q^{q^2+q+1}} = \left( \frac{1}{q} \right)^{t}
\left( \frac{q-1}{q} \right)^{q^2+q+1-t}.
\end{eqnarray*}
Then,
\begin{eqnarray*}
 {\rm Prob}\left( \# C_F(\F_q)=t \right) &=& \sum_{{\varepsilon_1, \dots, \varepsilon_{q^2+q+1}
\in \{ 0, 1 \}}\atop{\varepsilon_1 + \dots + \varepsilon_{q^2+q+1}= t}}
 \left( \frac{1}{q} \right)^{t}
\left( \frac{q-1}{q} \right)^{q^2+q+1-t} \\ &=&
{\rm Prob}\left( Y_1 + \dots + Y_{q^2+q+1}=t \right),
\end{eqnarray*}
and this proves Proposition \ref{nonsmooththm}. \qed

Now we want to use Lemma \ref{lsmdeg} to sieve out non-smooth curves. We remark
that Lemma \ref{lsmdeg} gives an exact formula without error term, but we need to choose $r$ as a function of
 $d$ and the product will contribute to the error term. In addition, $s$ itself depends on $r$.

As the number of closed points of degree $e$ in $U$ is bounded by the
number of closed points of degree
 $e$ in $\mathbb P^2$, which is $q^{2e}+q^{e}+1<2q^{2e}$, the product
\[\prod_{P \textrm { closed point of }U}  (1-q^{-z\deg P})^{-1} = \zeta_U (z)\]
converges for $\Re(z)>2.$ For the same reason, we get that
 \begin{equation}\label{zeta}\prod_{i=1}^s (1-q^{-3\deg P_i}) =
 \zeta_U(3)^{-1}\left( 1+ O\left( \frac{q^{-r}}{1-q^{-1}-2q^{-r}} \right)\right).\end{equation}
Indeed, in order to show \eqref{zeta}, we write
\[\prod_{i=1}^s (1-q^{-3\deg P_i}) = \zeta_U(3)^{-1} \prod_{\deg P \geq r} \left(1-q^{-3\deg P}\right)^{-1}. \]
For any sequence of nonnegative numbers $\{x_i\}_i$, we know that
\[1\leq \prod_{i=1}^\infty (1-x_i)^{-1} \leq \frac{1}{1 - \sum x_i}.\]
Taking the sequence in question to be $\{q^{-3\deg P}\}_{\deg P \geq r} $, it means that we need an upper bound for \[\sum_{\deg P\geq r} q^{-3\deg P} = \sum_{j=r}^\infty q^{-3j} \#\{\textrm{closed points of $U$ of degree } j \}.\] All the $P$'s until now have been closed points of $U$, but $U$ is a subset of $\mathbb P^2$, so it has at most $\#\mathbb P^2(F_{q^j})= q^{2j} + q^{j} +1 \leq 2 q^{2j}$ closed points of degree $j$. Hence
\[ \sum_{\deg P\geq r} q^{-3\deg P} \leq 2 \sum_{j=r}^\infty q^{-j} = \frac{2q^{-r}}{1-q^{-1}},\]
and now we get
\[1\leq \prod_{\deg P \geq r} \left(1-q^{-3\deg P}\right)^{-1} \leq \frac{1}{1- \frac{2q^{-r}}{1-q^{-1}}} ,\]
which proves \eqref{zeta}. Substituting \eqref{zeta} in Lemma~\ref{lsmdeg}, we obtain
\begin{equation}\label{smdeg}
\frac{\# \mathcal P_{d,r} } {\# S_d} = \zeta_U(3)^{-1}\frac{\# T} {\# H^0(Z,\mathcal O_Z)} \left( 1+ O\left( \frac{q^{-r}}{1-q^{-1}-2q^{-r}} \right)\right).
\end{equation}

\draftcom{This is a very crude estimate, but maybe we can get away with it.}
\subsection{Points of medium degree}

\begin{lemma} \label{precedent} For a closed point $P \in U$ of degree $e\leq d/3$, we have
\[\frac{ \#\{F\in S_d; C_F \cap U \mbox{ is not smooth of dimension 1 at } P\}}{ \#S_d} = q^{-3e}.\]
\end{lemma}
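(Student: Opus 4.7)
The plan is to interpret ``$C_F \cap U$ is not smooth of dimension $1$ at $P$'' as a linear vanishing condition on a first-order infinitesimal neighborhood of $P$, and then apply the surjectivity lemma (Lemma 2.1 from Poonen) to count.

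First, I would translate the non-smoothness condition. By the Jacobian criterion applied at the closed point $P$ of the smooth ambient surface $U$, the hypersurface $C_F$ fails to be smooth of dimension $1$ at $P$ if and only if $F$ and all of its first partial derivatives vanish at $P$; equivalently, $F$ lies in $\mathfrak{m}_P^2 \cdot \mathcal{O}_{\mathbb{P}^2}(d)_P$ where $\mathfrak{m}_P$ is the maximal ideal of $\mathcal{O}_{\mathbb{P}^2, P}$. Let $Y_P$ denote the closed subscheme of $\mathbb{P}^2$ supported at $P$ with structure sheaf $\mathcal{O}_{\mathbb{P}^2,P}/\mathfrak{m}_P^2$. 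Then the non-smoothness condition is exactly $F|_{Y_P} = 0$ inside $H^0(Y_P, \mathcal{O}_{Y_P}(d))$.

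Next, I would compute the size of the target. Since $P$ has degree $e$, the residue field $\kappa(P) = \mathcal{O}_P/\mathfrak{m}_P$ has $\mathbb{F}_q$-dimension $e$, and since $\mathbb{P}^2$ is smooth of dimension $2$ at $P$, the cotangent space $\mathfrak{m}_P/\mathfrak{m}_P^2$ has $\kappa(P)$-dimension $2$. Hence $\mathcal{O}_P/\mathfrak{m}_P^2$ has $\kappa(P)$-dimension $3$ and $\mathbb{F}_q$-dimension $3e$, so
\[
\# H^0(Y_P, \mathcal{O}_{Y_P}(d)) = q^{3e}.
\]

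Finally, I would apply Lemma 2.1 (quoted from Poonen) with $Y = Y_P$: its hypothesis $d \geq \dim H^0(Y_P, \mathcal{O}_{Y_P}) - 1 = 3e - 1$ holds because $e \leq d/3$. Thus the restriction map $\phi_d: S_d \to H^0(Y_P, \mathcal{O}_{Y_P}(d))$ is surjective, all its fibers have the same cardinality $\#S_d / q^{3e}$, and the fiber over $0$ is precisely the set of $F \in S_d$ for which $C_F \cap U$ is not smooth of dimension $1$ at $P$. Dividing by $\#S_d$ gives the claimed value $q^{-3e}$. The only delicate step is matching the Jacobian-criterion description of a singular point to membership in $\mathfrak{m}_P^2$; everything else is a direct invocation of the surjectivity lemma plus a dimension count.
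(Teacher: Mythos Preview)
Your proof is correct and follows essentially the same approach as the paper: identify non-smoothness at $P$ with vanishing in the first-order thickening $\mathcal{O}_P/\mathfrak{m}_P^2$, compute its $\mathbb{F}_q$-dimension as $3e$, and use surjectivity of the restriction map for $d \ge 3e-1$ to conclude that the bad set has density $q^{-3e}$. The paper phrases this either as ``take $m=2$ in Lemma 2.3 of Poonen'' or as an application of its own Lemma~\ref{lsmdeg} with $r=0$, $Z$ the $\mathfrak{m}_P^2$-neighborhood, and $T=\{0\}$, but unwinding either route yields exactly your argument.
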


\begin{proof} Take $m=2$ in Lemma 2.3 of \cite{p}. This also
follows from Lemma \ref{lsmdeg} by taking $r=0$ and $Z$ to be a
${\mathfrak m}_P^2$-neighborhood of $P$ (which mean that we look at
$F$ and its first order derivatives). Then, \[ H^0(Z, \mathcal O_Z)=
 \mathcal O_P/\mathfrak m_P^2,\]
and $\dim H^0(Z, \mathcal O_Z) = q^{3 \deg{P}}$. We also choose $T=
\{ (0,0,0) \},$ as we want $F$ and its first order derivatives to
vanish at $P$.  Then,
\begin{eqnarray*}
\frac{ \#\{F\in S_d; C_F \cap U \mbox{ is not smooth of dimension 1
at } P\}}{ \#S_d}  &=&
\frac{\# T} {\# H^0(Z,\mathcal O_Z)} = q^{-3 \deg{P}}.
\end{eqnarray*}
\end{proof}

\begin{lemma}
\[ \frac{\# \mathcal Q_{r,d}} {\# S_d} \leq 2 \frac{q^{-r}} {1-q^{-1}}.\]
\end{lemma}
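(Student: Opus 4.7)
The plan is to bound $\#\mathcal Q_{r,d}$ by a union bound over all closed points $P \in U$ with $r \leq \deg P \leq d/3$, and apply Lemma \ref{precedent} to each of them. Concretely, I would write
\begin{equation*}
\#\mathcal Q_{r,d} \leq \sum_{\substack{P \in U \text{ closed} \\ r \leq \deg P \leq d/3}} \#\{F \in S_d : C_F \cap U \text{ is not smooth of dim } 1 \text{ at } P\},
\end{equation*}
and then divide by $\#S_d$. By Lemma \ref{precedent}, each summand contributes $q^{-3\deg P}$ after dividing.

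Next I would group by degree $e = \deg P$ and use the elementary bound on the number of closed points of $U$ of degree $e$, which is at most $\#\mathbb P^2(\mathbb F_{q^e}) = q^{2e} + q^e + 1 \leq 2 q^{2e}$ (the same bound used to establish \eqref{zeta}). This gives
\begin{equation*}
\frac{\#\mathcal Q_{r,d}}{\#S_d} \leq \sum_{e=r}^{\lfloor d/3\rfloor} 2 q^{2e} \cdot q^{-3e} = 2 \sum_{e=r}^{\lfloor d/3\rfloor} q^{-e}.
\end{equation*}

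Finally, bounding the finite geometric sum by the tail of the full geometric series yields
\begin{equation*}
2\sum_{e=r}^{\lfloor d/3\rfloor} q^{-e} \leq 2 \sum_{e=r}^{\infty} q^{-e} = \frac{2 q^{-r}}{1-q^{-1}},
\end{equation*}
as claimed. There is no real obstacle here: the only thing to be careful about is that the hypothesis $\deg P \leq d/3$ in the definition of $\mathcal Q_{r,d}$ is exactly what allows us to invoke Lemma \ref{precedent} uniformly for every point appearing in the union bound; the rest is counting closed points of $\mathbb P^2$ and summing a geometric series.
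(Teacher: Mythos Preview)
Your proposal is correct and follows essentially the same argument as the paper: a union bound over closed points $P$ with $r\le \deg P\le d/3$, applying Lemma~\ref{precedent} to get $q^{-3\deg P}$, bounding the number of closed points of degree $e$ in $U$ by $2q^{2e}$, and summing the resulting geometric series. There is nothing to add.
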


\begin{proof}
We follow the proof of Lemma 2.4 of \cite{p}. We have that \begin{eqnarray*}
 \frac{\# \mathcal Q_{r,d}} {\# S_d} \leq \sum_{{P \in U} \atop {\deg{P}=r}}^{d/3}
\frac{\# \{ F\in S_d; C_F \cap U \textrm{ is not smooth of dimension
$1$ at } P \}}{\# S_d},
\end{eqnarray*}
and $\#U(\mathbb F_{q^e}) \leq \#\mathbb P^2(\mathbb  F_{q^e}) =
q^{2e} + q^{e}+1\leq 2 q^{2e}.$ Then, using Lemma \ref{precedent},
we have
\begin{eqnarray*}
 \frac{\# \mathcal Q_{r,d}} {\# S_d} \leq 2 \sum_{e=r}^{d/3}
 q^{-e} \leq 2 \sum_{e=r}^{\infty} q^{-e} = 2 \frac{q^{-r}} {1-q^{-1}}.
\end{eqnarray*}

\end{proof}

\subsection{Points of high degree}

\begin{lemma} For $P \in \mathbb A^2(\mathbb F_q)$ of degree $e$, we have
\[\frac{ \#\{f\in A_{\leq d}; f(P) =0 \}}{ \# A_{\leq d}} \leq  q^{-\min(d+1,e)} .\]\
\end{lemma}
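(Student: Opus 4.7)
The plan is to interpret the count as the kernel size of an evaluation map and then bound the dimension of its image from below.

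First I would fix a geometric point $\tilde P = (\alpha,\beta) \in \mathbb A^2(\mathbb F_{q^e})$ lying above the closed point $P$, so that $\mathbb F_q(\alpha,\beta) = \mathbb F_{q^e}$. The condition $f(P)=0$ is equivalent to $f(\alpha,\beta)=0$. Thus the numerator counts the kernel of the $\mathbb F_q$-linear evaluation map
\[
\mathrm{ev}_d : A_{\leq d} \longrightarrow \mathbb F_{q^e}, \qquad f \longmapsto f(\alpha,\beta).
\]
If I let $V_d = \mathrm{ev}_d(A_{\leq d}) \subseteq \mathbb F_{q^e}$ and set $N_d = \dim_{\mathbb F_q} V_d$, then the ratio in the statement is exactly $q^{-N_d}$. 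Since $V_d \subseteq \mathbb F_{q^e}$ we have the trivial bound $N_d \leq e$, so the task reduces to showing $N_d \geq d+1$ whenever $N_d < e$.

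The heart of the argument is the observation that the ascending chain $V_0 \subseteq V_1 \subseteq V_2 \subseteq \cdots$ is strictly increasing as long as it has not yet reached $\mathbb F_{q^e}$. Indeed, if $V_d = V_{d+1}$, then for any $v = \mathrm{ev}_d(g) \in V_d$ with $g \in A_{\leq d}$, we have $\alpha v = \mathrm{ev}_{d+1}(x_1 g) \in V_{d+1} = V_d$, and similarly $\beta v \in V_d$. So $V_d$ is an $\mathbb F_q$-subspace containing $1$ and stable under multiplication by $\alpha$ and $\beta$, hence is an $\mathbb F_q$-subalgebra containing $\mathbb F_q[\alpha,\beta] = \mathbb F_{q^e}$; therefore $V_d = \mathbb F_{q^e}$ and $N_d = e$. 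Contrapositively, if $N_d < e$ then $N_{d+1} \geq N_d + 1$.

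Starting from $N_0 = 1$ (since $V_0 = \mathbb F_q \cdot 1$) and iterating this one-step increment $d$ times, I obtain $N_d \geq \min(d+1, e)$, and therefore
\[
\frac{\#\{f \in A_{\leq d} : f(P) = 0\}}{\#A_{\leq d}} = q^{-N_d} \leq q^{-\min(d+1,e)},
\]
as required. The only step that takes any thought is the subalgebra argument in the previous paragraph; the rest is linear algebra bookkeeping, and no sieve input from the earlier lemmas is needed here.
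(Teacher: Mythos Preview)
Your argument is correct and is essentially the proof of Lemma~2.5 in Poonen's paper, which is exactly what the paper invokes (its proof is the one-line citation ``Take $n=2$ in Lemma~2.5 of \cite{p}''). One cosmetic remark: stability of $V_d$ under multiplication by $\alpha$ and $\beta$ does not literally make $V_d$ an $\mathbb F_q$-subalgebra, but since $1\in V_d$ it forces every monomial $\alpha^i\beta^j$ to lie in $V_d$, hence $\mathbb F_q[\alpha,\beta]=\mathbb F_{q^e}\subseteq V_d$, which is all you need.
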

\begin{proof} Take $n=2$ in Lemma 2.5 of \cite{p}. \end{proof}

\begin{lemma}
\[\frac{\#\mathcal Q^{high}_d} {\# S_d} \leq 3 (d-1)^2 q^{-\min\left(\left\lfloor \frac{d}{p}\right\rfloor +1, \frac{d}{3} \right)} + 3d q^{-\lfloor\frac{d-1}{p}\rfloor-1}. \]
\end{lemma}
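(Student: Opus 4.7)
The plan is to adapt the proof of Poonen's Lemma 2.6 in \cite{p} to the plane case, while keeping track of the implicit constants. First I would cover $\mathbb{P}^2$ by the three standard affine charts $\{X_i \neq 0\}$; since any closed point of $U$ of degree $> d/3$ lies in at least one chart, a union bound reduces the problem to bounding, on each chart, the proportion of dehomogenized polynomials $f \in A_{\leq d}$ for which the affine curve $f = 0$ fails to be smooth of dimension $1$ at some point of degree $> d/3$. Each of the three charts contributes identically, which will account for the overall factor of $3$ in both terms.

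On a fixed chart, the heart of the argument is Poonen's characteristic-$p$ decomposition. The idea is to write a general $f \in A_{\leq d}$ in the form
\[ f = f_0 + g_0^p + x_1 g_1^p + x_2 g_2^p, \]
with $g_0 \in A_{\leq \lfloor d/p\rfloor}$ and $g_1, g_2 \in A_{\leq \lfloor (d-1)/p\rfloor}$, and $f_0$ ranging over a complementary subspace. Since $\partial(g^p)/\partial x_i = 0$ in characteristic $p$, one computes $\partial f/\partial x_i = \partial f_0/\partial x_i + g_i^p$ for $i=1,2$, so once $f_0$ is fixed the two partial derivatives depend only on $g_1, g_2$, independently of $g_0$, while the value of $f$ itself still varies with $g_0$. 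This effectively decouples the three vanishing conditions imposed by singularity.

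To carry out the sieve, fix a closed point $P \in U$ of degree $e > d/3$ and condition on a choice of $f_0$. Non-smoothness of the affine curve at $P$ requires $f(P) = \partial_1 f(P) = \partial_2 f(P) = 0$. Applying the previous lemma to the affine spaces of possible $g_i$, and using that $t \mapsto t^p$ is a bijection on $\mathbb{F}_{q^e}$, the densities of $g_1, g_2$ satisfying $\partial_i f(P) = 0$ are each bounded by $q^{-\min(\lfloor (d-1)/p\rfloor + 1,\, e)}$, and the density of $g_0$ satisfying $f(P) = 0$ is bounded by $q^{-\min(\lfloor d/p\rfloor + 1,\, e)}$. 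One then multiplies these densities, sums over all closed points of $U$ of degree $e$ using the crude bound $\#\{P : \deg P = e\} \leq 2q^{2e}$, and finally sums over $e > d/3$. Splitting this sum into the regime where the minima are attained by $\lfloor d/p\rfloor + 1$ (respectively $\lfloor (d-1)/p\rfloor + 1$) and the tail regime where they are attained by $e$ gives a geometric series in $q^{-e}$ whose leading term produces the two contributions claimed.

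The main obstacle I expect is the bookkeeping of the decomposition: verifying that the direct-sum decomposition above is well-defined and surjective with the claimed degree bounds on the $g_i$, and then extracting the precise polynomial prefactors $(d-1)^2$ and $d$ from the number of possible derivative monomials and the $x_1$-factor in the decomposition. One must also handle the degenerate case $p > d$, in which $\lfloor d/p\rfloor = 0$ and the $p$-th power terms contribute only constants, to make sure the stated bound remains valid. The $\min$ appearing in the first term (with $d/3$) is what ensures the bound holds uniformly in the small-$p$ regime where $\lfloor d/p\rfloor + 1$ could otherwise exceed $d/3$ and the geometric sum in $e$ would no longer be dominated by its first term.
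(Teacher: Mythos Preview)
Your reduction to affine charts and the decomposition $f = f_0 + g_0^p + x_1 g_1^p + x_2 g_2^p$ are fine (modulo relabelling, this is exactly Poonen's decoupling trick), but the sieve you describe after that has a genuine gap: the union bound over all closed points $P$ of degree $e>d/3$ diverges. For a fixed $P$ of degree $e$, your three independent vanishing conditions give a density bound of
\[
q^{-\min(\lfloor d/p\rfloor+1,\,e)-2\min(\lfloor(d-1)/p\rfloor+1,\,e)},
\]
and there are up to $\sim q^{2e}$ closed points of degree $e$. When $e$ is small enough that all three minima equal $e$, the summand is $\sim q^{-e}$ and you get a convergent geometric tail. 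But once $e$ exceeds $\lfloor d/p\rfloor+1$, the density bound freezes at a constant while the number of points still grows like $q^{2e}$, so the summand is $\sim q^{2e}\cdot(\text{const})$ and the series blows up. There is no geometric series in $q^{-e}$ in that regime, and hence no way to extract the claimed bound from a pointwise union bound.

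The paper (following Poonen) avoids this entirely by never summing over high-degree points. Instead it runs a \emph{dimension-dropping} argument: set $W_0=U$, $W_1=U\cap\{D_1f=0\}$, $W_2=U\cap\{D_1f=D_2f=0\}$, and bound separately (i) the probability that choosing $g_{i+1}$ fails to make $\dim W_{i+1}<\dim W_i$, and (ii) given that $W_2$ is zero-dimensional, the probability that some point of $W_2$ of degree $>d/3$ survives the choice of the $p$-th power term. Step (i) uses B\'ezout to bound the number of top-dimensional components of $W_i$ by $(d-1)^i$; summing $1+(d-1)=d$ over $i=0,1$ gives the factor $d$ in the second term. Step (ii) uses B\'ezout again to bound $\#W_2\le(d-1)^2$, which is where the factor $(d-1)^2$ in the first term comes from---not from counting monomials as you suggest. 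The point is that B\'ezout replaces the divergent sum over points of $U$ by a sum over the at most $(d-1)^2$ points of $W_2$, and this is the idea your outline is missing.
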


\begin{proof} If we get a bound for all affine $U\subset \mathbb A^2$, the same bound multiplied by $3$ will hold for $U\subset \mathbb P^2$, since it can be covered by three affine charts. So we can reduce the problem to affine sets.
We follow the proof of Lemma 2.6 of \cite{p}, while keeping track of
the constants appearing in the error terms, which is not done in
\cite{p} as only the main term is needed for his application. In our
case the coordinates are simply $x_1$ and $x_2$, which have degree
$1$ and $D_i=\frac{\partial}{\partial x_i}$, $i=1,2$ are already
global derivations. This allows us to work globally on the set $U$
and there is no need to work locally as in \cite{p}. Now we can work
with dehomogenizations of polynomials in $S_d$, so we need to find
the polynomials $f \in A_{\leq d}$ for which $C_f \cap U$ fails to
be smooth at some $P \in U$. This happens if and only if
$f(P)=(D_1f)(P) =(D_2f)(P)=0.$

 Any polynomial $f \in A_{\leq d}$ can be written as
\[ f = g_0 + g_1^p x_1 + g_2^p x_2 + h^p\]
with $g_0 \in A_{\leq d}$, $g_1, g_2 \in A_{\leq \gamma}$ and $h \in A_{\leq \eta},$ where $\gamma = \left\lfloor\frac{d-1}{p}\right\rfloor$ and $\eta =\left\lfloor \frac{d}{p}\right\rfloor.$ The ``trick'' used by Poonen is based on the observation that selecting $f$ uniformly at random amounts to selecting $g_0, g_1, g_2$ and $h$ independently and uniformly at random. The advantage is that $D_i f = D_i g_0 + g_i^p,$ so each derivative depends only on $g_0$ and one of the $g_1, g_2$.  Set
\[W_0 = U, \quad W_1= U\cap \{D_1f =0\}, \quad W_2= U\cap \{D_1f = D_2f=0\}.\]

\emph{Claim 1.}  For $i=0,1$ and for each choice of $g_0, \ldots,  g_i$, such that $\dim W_i \leq 2-i$,

\[ \frac{\#\{(g_{i+1},\ldots, g_2, h); \dim W_{i+1} \leq  1-i  \} }{\#\{(g_{i+1},\ldots, g_2, h)  \} } \leq (d-1)^i q^{-\lfloor\frac{d-1}{p}\rfloor-1} .\]

B\'ezout's theorem tells us that the number of $(2-i)$-dimensional components of $(W_i)_{\textbf{red}}$ is bounded above by $(d-1)^i, $ since $\deg D_if \leq d-1,$ for each $i,$ and $\deg \overline{ U} =1.$ The rest of the argument follows from Poonen's computation.

\emph{Claim 2.} For any choice of $g_0,\ldots, g_2$,
\[\frac{\#\{h; C_f\cap W_2 \cap U_{>d/3}\}}{\{ \textrm {all } h\}} \leq (d-1)^2 q^{-\min\left(\left\lfloor \frac{d}{p}\right\rfloor+1, \frac{d}{3} \right)}. \]

This follows from the fact that $\# W_2 \leq (d-1)^2$ (from the same B\'ezout argument as before) and the coset argument in \cite{p} shows that for each $P \in W_2,$ the set of bad $h$'s at $P$ is either empty or has density at most  $q^{-\min\left(\left\lfloor \frac{d}{p}\right\rfloor+1, \frac{d}{3} \right)}$ in the set of all $h$ (because $\deg P>d/3$).

To finish the proof of the lemma, put the two claims together and get that

\[\frac{\#\mathcal Q^{high}_d} {\# S_d}
\leq 3 (d-1)^2 q^{-\min\left(\left\lfloor \frac{d}{p}\right\rfloor +1, \frac{d}{3} \right)} + 3d q^{-\lfloor\frac{d-1}{p}\rfloor-1}.\]

\end{proof}


Combining the points of small, medium and high degree, we get that
\begin{eqnarray} \label{countsmooth}
&&\{F\in S_d; C_F \cap U \textrm{ is smooth of dimension $1$ and
}F|_Z\in T  \} \\ \nonumber &&= \frac{\# T}{\zeta_U(3) \#
H^0(Z,\mathcal O_Z)} \left( 1 + O \left( \frac{q^{-r}}{1 - q^{-1} -
2q^{-r}} \right) \right) \\ &&  \nonumber + O \left(
\frac{q^{-r}}{1-q^{-1}} + (d-1)^2 q^{-\min\left(\left\lfloor
\frac{d}{p}\right\rfloor +1, \frac{d}{3} \right)} + d
q^{-\lfloor\frac{d-1}{p}\rfloor-1} \right)
\end{eqnarray}

We need to choose an appropriate value for $r$. According to Lemma
\ref{lsmdeg}, we must have  $d \geq 3rs + \dim H^0(Z, \mathcal O_Z)
-1$ and $\frac{1}{r}(q^{2r} +q^r+1)<s<q^{2r}+q^r+1$. When using
\eqref{countsmooth} in Section \ref{numberpoints}, we will always have $Z
\subset \mathbb{P}^2(\F_q),$ thus $\dim H^0(Z, \mathcal
O_Z)<6q^2$. We take $r=\frac{3B+\log d}{3}$ for any fixed constant
$B\geq 0$. With this choice of $r$, the error term of
\eqref{countsmooth} coming from points of medium and high degree is
therefore
\begin{equation}\label{et}O\left(\frac{q^{-B}d^{-1/3}}{1-q^{-1}}+
(d-1)^2 q^{-\min\left(\left\lfloor \frac{d}{p}\right\rfloor +1,
\frac{d}{3} \right)} + d
q^{-\lfloor\frac{d-1}{p}\rfloor-1}\right).\end{equation}



\section{Number of points}
\label{numberpoints}

We apply the results in Section~\ref{sieve} twice. The first time to
evaluate the fraction of homogeneous polynomials of degree $d$ that
define smooth plane curves, and the second time to evaluate the
fraction of homogeneous polynomials of degree $d$ that define smooth
plane curves with predetermined $\mathbb F_q$-points. By taking the
quotient we then obtain an asymptotic formula for the fraction of
smooth plane curves that have predetermined $\mathbb F_q$-points.

For the first evaluation, we take $Z=\emptyset$ and $T=\{0\}$ in
\eqref{countsmooth} to get
\begin{align}\label{nopts}
\frac {\#\{F\in S_d^{\rm ns} \}}{\#S_d} =&  \zeta_{{\mathbb P}^2}(3)^{-1}\left( 1+ O\left( \frac{q^{-B}d^{-1/3}}{1-q^{-1}-2q^{-B}d^{-1/3}} \right) \right)\\ \nonumber
&+ O\left(\frac{q^{-B}d^{-1/3}}{1-q^{-1}}+ (d-1)^2 q^{-\min\left(\left\lfloor \frac{d}{p}\right\rfloor+1, \frac{d}{3} \right)} + d q^{-\lfloor\frac{d-1}{p}\rfloor-1}\right).
\end{align}

Pick $P_1, \ldots P_{q^2+q+1}$ an enumeration of the points of $\mathbb P^2(\mathbb F_q)$,
and let $0 \leq t \leq q^2+q+1$.  We want to compute
\begin{eqnarray*}\label{pts}\frac{\#\{F\in S_d^{\rm ns} ;  P_1, \dots, P_t \in C_F(\F_q),
P_{t+1}, \dots, P_{q^2+q+1} \not\in
C_F(\F_q)\}}{\#S_d}.\end{eqnarray*}

This is achieved by taking $Z$ to be an $\mathfrak m^2_P$-neighborhood for each point $P \in \mathbb P^2(\mathbb F_q)$
(this means that we look at the value of $F$ and its first order derivatives at each point). Thus
\begin{eqnarray} \label {choiceforZ} H^0(Z, \mathcal O_Z)= \prod_{P \in \mathbb P^2(\mathbb F_q)} \mathcal O_P/\mathfrak m^2_P.\end{eqnarray}
Each space has dimension $3$, so $\dim H^0(Z, \mathcal O_Z) =
3(q^2+q+1),$ and $\#  H^0(Z, \mathcal O_Z) = q^{3(q^2+q+1)}.$

Then we want $T$ to be the set of $((a_i, b_i, c_i))_{1\leq i \leq q^2+q+1}$
such that $a_1, \ldots, a_t = 0$, $a_{t+1}, \dots, a_{q^2+q+1}$ $\in \mathbb F_q^\times$,
and $(a_i, b_i, c_i)\neq (0,0,0)$ for $1 \leq i \leq q^2+q+1$. This gives
$$|T| = (q^2-1)^{t} (q-1)^{q^2+q+1-t} q^{2(q^2+q+1-t)}.$$

Using \eqref{countsmooth} with this choice of $Z$ and $T$,  we
obtain
\begin{eqnarray}\label{pts2}
&&\frac{\#\{F\in S_d^{\rm ns} ;  P_1, \dots, P_t \in C_F(\F_q),
P_{t+1}, \dots, P_{q^2+q+1} \not\in  C_F(\F_q)\}}{\#S_d}\\
\nonumber &&= \zeta_{U}(3)^{-1} \frac{(q^2-1)^{t} (q-1)^{q^2+q+1-t}
q^{2(q^2+q+1-t)}}{q^{3(q^2+q+1)}} \left( 1+ O\left(
\frac{q^{-B}d^{-1/3}}{1-q^{-1}-2q^{-B}d^{-1/3}} \right) \right)\\
\nonumber &&+ O\left(\frac{q^{-B}d^{-1/3}}{1-q^{-1}}+ (d-1)^2
q^{-\min\left(\left\lfloor \frac{d}{p}\right\rfloor+1, \frac{d}{3}
\right)} + d q^{-\lfloor\frac{d-1}{p}\rfloor -1}\right).
\end{eqnarray}


Here, by multiplicativity of zeta functions,
$$ \frac{\zeta_{{\mathbb P}^2}(z)}{\zeta_{U}(z)} = \zeta_{Z}(z) = \left( \frac{1}{1-q^{-z}} \right)^{q^2+q+1}
\;\;\;\mbox{for $U = {\mathbb P}^2 \setminus Z.$}$$
Then by taking the quotient of \eqref{pts2} and \eqref{nopts}, we get that
\begin{align*} &\frac{\#\{F\in S_d^{\rm ns} ;  P_1, \dots, P_t \in C_F(\F_q),
P_{t+1}, \dots, P_{q^2+q+1} \not\in  C_F(\F_q)\}}
{\# S_d^{\rm ns}} \\
&=  \left( \frac{q^3}{q^3-1} \right)^{q^2+q+1} \frac{(q^2-1)^{t} (q-1)^{q^2+q+1-t} q^{2(q^2+q+1-t)}}{q^{3(q^2+q+1)}}\\
&\times \left(1+ O\left(q^{t}\left({q^{-B}d^{-1/3}}+ + (d-1)^2 q^{-\min\left(\left\lfloor \frac{d}{p}\right\rfloor+1, \frac{d}{3} \right)} + d q^{-\lfloor\frac{d-1}{p}\rfloor -1}\right)\right) \right)\\
&=
\left( \frac{q+1}{q^2+q+1} \right)^{t} \left( \frac{q^2}{q^2+q+1} \right)^{q^2+q+1-t} \\&\times\left(1+ O\left(q^{t}\left({q^{-B}d^{-1/3}}+ + (d-1)^2 q^{-\min\left(\left\lfloor \frac{d}{p}\right\rfloor+1, \frac{d}{3} \right)} + d q^{-\lfloor\frac{d-1}{p}\rfloor -1}\right)\right) \right).
 \end{align*}

Theorem \ref{thm1} follows by taking $B=0$ above and noting that for any $\varepsilon_1, \dots, \varepsilon_{q^2+q+1} \in \left\{ 0, 1 \right\}$ with $\varepsilon_1 + \dots +  \varepsilon_{q^2+q+1}=t$, 
\[\Prob \left( X_1 = \varepsilon_1,
\dots, X_{q^2+q+1} = \varepsilon_{q^2+q+1} \right) = \left( \frac{q+1}{q^2+q+1} \right)^{t} \left( \frac{q^2}{(q^2+q+1)} \right)^{q^2+q+1-t} .\]

\section{Moments}

By Theorem \ref{thm1} the number of points of smooth plane curves over $\F_q$ is distributed as $X_1 + \dots + X_{q^2+q+1}$,
and the trace of the Frobenius as $X_1 + \dots + X_{q^2+q+1} - (q+1)$.(The mean of $X_1, \dots, X_{q^2+q+1}$ is $q+1$.)
Applying the triangular central limit theorem to the random variables $X_1, \dots, X_{q^2+q+1}$,
we have that
$(X_1 + \dots + X_{q^2+q+1} - (q+1)) / \sqrt{q+1}$ is distributed as $N(0,1)$
when $q \rightarrow \infty$.

We would like to say the same thing about the distribution of the trace of Frobenius in our family when
$d$ and $q$ go to infinity, which amounts to the computation of the moments.

We will first compute
\[N_k(q,d)=\frac{1}{\# S_d^{\rm ns}} \sum_{F\in S_d^{\rm ns}}  \left(\frac{\#C_F(\mathbb{F}_q)}{\sqrt{q+1}} \right)^k,\]
and then deduce the result for $M_k(q,d)$.

By using an exponential sum to count the number of points in the curve, we can write
\begin{align*}
N_k(q,d)
=&\frac{1}{\# S_d^{\rm ns}} \left(\frac{1}{\sqrt{q+1}} \right)^k \sum_{F\in S_d^{\rm ns}}  \left(\sum_{P\in \mathbb{P}^2(\mathbb{F}_q) }S_F(P)\right)^k,\\
\end{align*}
where
\[S_F(P)=\frac{1}{q}\sum_{t \in \mathbb{F}_q}e\left( \frac{tF(P)}{q}\right).\]
Thus,  expanding the $k$-th power,
\begin{eqnarray*}
N_k(q,d)&=&\frac{1}{\# S_d^{\rm ns}}
\left(\frac{1}{\sqrt{q+1}} \right)^k \sum_{P_1, \dots, P_k \in \mathbb{P}^2(\mathbb{F}_q)}
\sum_{F \in S_d^{\rm ns}} S_F(P_1) \dots S_F(P_k) \\
&=&\frac{1}{\# S_d^{\rm ns}}
\frac{1}{(q+1)^{k/2}} \sum_{\ell=1}^{\min(k, q^2+q+1)}  h(\ell,k)\sum_{({\bf P},{\bf b})\in P_{\ell,k}}
\sum_{F \in S_d^{\rm ns}} S_F(P_1)^{b_1} \dots S_F(P_\ell)^{b_\ell},
\end{eqnarray*}
where
\begin{eqnarray*}
P_{\ell,k}= &&\left\{ ({\bf P},{\bf b}); {\bf P}=(P_1, \dots,P_\ell) \;\mbox{with $P_i$ distinct points of  $\mathbb{P}^2(\mathbb{F}_q)$},  \right.\\
&&\left. {\bf b} =(b_1,\dots,b_\ell) \;\mbox{with $b_i$ positive integers such that
$b_1 + \dots +b_\ell = k$} \right\}.\end{eqnarray*}
Notice that
\[\sum_{\ell=1}^k  h(\ell,k)\sum_{({\bf P},{\bf b})\in P_{\ell,k}}1=(q^2+q+1)^k.\]

Now let us fix $({\bf P}, {\bf b}) \in P_{\ell,k}$. Then,
\begin{align*}
&\frac{1}{\# S_d^{\rm ns}} \sum_{F\in S_d^{\rm ns}}
S_F(P_1)^{b_1} \dots S_F(P_\ell)^{b_\ell}=\sum_{a_1, \dots, a_\ell \in \mathbb{F}_q} \frac{1}{\# S_d^{\rm ns}}
\sum_{{ F \in S_d^{\rm ns}}\atop{F(P_j)=a_j}} \prod_{j=1}^\ell S(a_j)^{b_j},
 \end{align*}
where \[S(a) = \frac{1}{q}\sum_{t \in \mathbb{F}_q}e\left( \frac{t a}{q}\right)=\left\{\begin{array}{cc}1 & a=0,\\ 0 & a \not = 0.\end{array}\right.\]
The $b_j$ have no influence on the result and we obtain nonzero terms only when $a_j=0$ for $1\leq j\leq \ell$, and in this case
\begin{eqnarray}
\frac{1}{\# S_d^{\rm ns}} \sum_{F\in S_d^{\rm ns}}
S_F(P_1)^{b_1} \dots S_F(P_\ell)^{b_\ell} &=&
\frac{ \# \left\{  F \in S_d^{ns} ; F(P_i)=0 \;\mbox{for $1 \leq i \leq \ell$} \right\} }
{\# S_d^{\rm ns}}.
\end{eqnarray}

We want to point out that this can also be computed similarly to the work in Section \ref{numberpoints}. 

Choose the constant $B=k$, $Z$ as in
\eqref{choiceforZ} and $T$ to be the set of $\left((a_i, b_i, c_i)\right)_{1 \leq i \leq q^2+q+1}$
such that $a_1 = \dots = a_\ell=0$, $a_{\ell+1}, \dots, a_{q^2+q+1} \in \F_q$
and $(a_i, b_i, c_i) \neq (0,0,0)$ for $1 \leq i \leq q^2+q+1$. Then,
$$|T| = (q^2-1)^{\ell} (q^3-1)^{q^2+q+1-\ell},$$
and
\begin{eqnarray*}
&&\frac{ \# \left\{  F \in S_d^{ns} ; F(P_i)=0 \;\mbox{for $1 \leq i \leq \ell$} \right\} }
{\# S_d^{\rm ns}} \\&=&
\left(\frac{q+1}{q^2+q+1}\right)^\ell \left(1+ O\left({q^{-k}d^{-1/3}q^{\ell}}+ (d-1)^2 q^{\ell-\min\left(\left\lfloor \frac{d}{p}\right\rfloor+1, \frac{d}{3} \right)} + d q^{\ell-\lfloor\frac{d-1}{p}\rfloor-1}\right)\right) .
\end{eqnarray*}

Now we sum over all the elements in $P_{\ell,k}$:
\begin{eqnarray*}
N_k(q,d) &=& \frac{1}{(q+1)^{k/2}}  \sum_{\ell=1}^{\min(k, q^2+q+1)}  h(\ell,k)\sum_{({\bf P},{\bf b})\in P_{\ell,k}} \left(\frac{q+1}{q^2+q+1}\right)^\ell
\\
&\times & \left(1+ O\left( q^{\min(k, q^2+q+1)} \left( q^{-k}d^{-1/3} + (d-1)^2 q^{-\min\left(\left\lfloor \frac{d}{p}\right\rfloor+1, \frac{d}{3} \right)} + d q^{-\lfloor\frac{d-1}{p}\rfloor-1}\right)\right) \right) .
\end{eqnarray*}

On the other hand, we have
\[\mathbb{E}\left(\left(\frac{1}{\sqrt{q+1}} \sum_{i=1}^{q^2+q+1} X_i \right)^k\right)= \left(\frac{1}{\sqrt{q+1}}\right)^k \sum_{\ell=1}^k h(\ell, k) \sum_{({\bf i,b})\in A_{\ell,k}}\mathbb{E}\left(X_{i_1}^{b_1}\dots X_{i_\ell}^{b_\ell}\right) , \]
where
\[A_{\ell,k}=\left\{({\bf i,b}); {\bf i}=(i_1,\dots,i_\ell), 1\leq i_j \leq q^2+q+1 \textrm{ distinct }, {\bf b}=(b_1,\dots, b_\ell) \sum_{j=1}^\ell b_j = k \right\}.\]
Since
\[\mathbb E(X_1^{b_1} \dots X_\ell^{b_\ell} ) = \left( \frac{q+1}{q^2+q+1} \right)^\ell \]
and $\# P_{\ell, k} = \# A_{\ell, k}$, we conclude that
\begin{eqnarray} \label{MT}
&&N_k(q,d) = \mathbb{E}\left(\left(\frac{1}{\sqrt{q+1}} \sum_{i=1}^{q^2+q+1} X_i \right)^k\right)\\
\label{ET}
&\times& \left(1+ O\left( q^{\min(k, q^2+q+1)} \left(q^{-k} d^{-1/3} + (d-1)^2 q^{-\min\left(\left\lfloor \frac{d}{p}\right\rfloor+1, \frac{d}{3} \right)} + d q^{-\lfloor\frac{d-1}{p}\rfloor-1}\right)\right) \right) .
\end{eqnarray}

Now using \eqref{MT} and the binomial theorem, we get that
\begin{align*}
M_k(q,d) =& \sum_{j=0}^k \binom{k}{j} N_j(q,d) \left(-\sqrt{q+1}\right)^{k-j}\\
\sim&\sum_{j=0}^k \binom{k}{j} \mathbb{E}\left(\left(\frac{1}{\sqrt{q+1}} \sum_{i=1}^{q^2+q+1} X_i \right)^j\right)\left(-\sqrt{q+1}\right)^{k-j}\\
=& {\mathbb{E}} \left( \left( \frac{1}{\sqrt{q+1}} \left( \sum_{i=1}^{q^2+q+1} X_i - (q+1) \right) \right)^k \right)
\end{align*}
with the same error term as \eqref{ET}.
This completes the proof of Theorem \ref{thm2}.

\noindent {\bf Acknowledgments}. 
The authors wish to thank Pierre Deligne and Ze\'{e}v Rudnick for suggesting the problem that we consider
in this paper. The authors are grateful to both of them as well as P\"ar Kurlberg for helpful discussions.
This work was supported by the Natural Sciences and Engineering Research Council
of Canada [B.F., Discovery Grant 155635-2008 to C.D., 355412-2008 to M.L.] and
the National Science Foundation of U.S. [DMS-0652529 and DMS-0635607 to A.B.]. M.L. is also supported by a Faculty of Science Startup grant from the University of Alberta, and C.D. is also supported by a grant to the Institute for Advanced Study from the Minerva Research Foundation.

\end{document}